\theoremstyle{definition}
\newtheorem{definition}{Definition}[section]
\newtheorem{dfn}[definition]{Definition}
\newtheorem{rmk}[definition]{Remark}
\theoremstyle{plain}
\newtheorem{satz}[definition]{Proposition}
\newtheorem{thm}[definition]{Theorem}
\newtheorem{lem}[definition]{Lemma}
\newtheorem{cor}[definition]{Corollary}
\newcommand{\R}{\ensuremath{\mathbb{R}}}	
\newcommand{\C}{\ensuremath{\mathbb{C}}}
\begin{document}
\author{Jonas Grabbe and Andrei Moroianu}
\title{On conformal holonomy}
\begin{center}
~\\
\vspace{2cm}{\LARGE  On the holonomy groups of Weyl manifolds} \\
\vspace{0.5cm}{Jonas Grabbe}
\end{center}
\vspace{0.5cm}
\begin{abstract}
We classify the possible local holonomy groups of Weyl connections. The Berger-Simons theorem and the Merkulov-Schwachh\"ofer classification of holonomy groups of irreducible torsion-free connections leaves us with the remaining case, where the Weyl connection $D$ is reducible and non-closed. In this case, it was shown in \cite{BeMo} that the Weyl structure is an adapted Weyl structure of a non-closed conformal product. Furthermore we prove that non-closed Einstein-Weyl product structures only exist in dimension $4$.   
\end{abstract}
\section{Introduction}

In \cite{ BeMo}, the notion of a conformal product was introduced, by generalizing a property characterizing the Riemannian product of two Riemannian manifolds, namely, the existence of two complementary orthogonal Riemannian submersions. Thus a conformal structure on a manifold is said to be a \em conformal product \em if it admits two conformal submersions with orthogonal fibres intersecting transversally. \\

A conformal structure is a positive definite symmetric bilinear tensor with values in the square of the weight bundle. In conformal geometry, the role of the Levi-Civita connection is played by an affine space of torsion-free connections preserving the conformal structure (called \em Weyl structures\em), which is in one-to-one correspondence with the space of connections on the weight bundle. In particular, Weyl structures coincide locally with Levi-Civita connections of Riemannian metrics in the conformal class if and only if the corresponding connection on the weight bundle has vanishing curvature. These Weyl structures are called closed, thus we will restrict our study on the non-closed Weyl structures.\\ 

In 1918 Hermann Weyl introduced conformal structures in his attempt to formulate a unified field theory. He thought that the conformal structure was able to unify gravity and electromagnetic interaction. His physical motivation was, that the universe is not really a Riemannian manifold, for there is no absolute measure of length, oppose to Einstein's model for physical space. 
In the theory he conjectured a determination of length at one point induces only a first-order approximation to determination of length at surrounding points. Einstein's 
counter argument was that no stable frequency of atomic clocks could be expected. In that case there can not be physics, because everybody would have his own laws, and there would be chaos \cite{Chan}. \\
Although Weyl's theory failed for physical reasons, it remains a beautiful piece of mathematics.  
   
The work \cite{BeMo} was partially inspired by a the paper \cite{Mo}, where  Spin conformal manifolds with Weyl structures $D$ carrying parallel spinors have been studied. It turned out, that the spin holonomy representation of a non-closed Weyl structure has no fixed point, except in dimension $4$, where  local examples exist. In \cite{BeMo} the analogous question was considered for exterior forms. Here we conclude this study by giving also an analogous result.
\\
In this paper we classify the possible local holonomy groups of Weyl connections. Holonomy groups are defined by parallel displacements along loops in $x \in M$ with respect to a connection form on a principal fibre bundle or with respect to a covariant derivative in a vector bundle over $M$. 
The Riemannian case is well studied and classified by the famous Berger-Simons theorem \cite{Besse}. Since conformal geometry of a closed Weyl structure is locally equivalent to Riemann geometry, in the case of a closed Weyl structure, the Berger-Simons theorem applies. On the other hand if the Weyl structure is irreducible, the Merkulov-Schwachh\"ofer classification of possible holonomies of irreducible torsion-free connections \cite{MeSc} applies. Hence the remaining case to study is the case where the Weyl structure is reducible and non-closed. In this case, it was shown in \cite{BeMo} that the Weyl structure is an adapted Weyl structure of a non-closed conformal product. Our main result is the following:
\begin{thm}
A non-closed conformal product $(M_1 \times M_2, c , D )$, with $n_i := dim(M_i)$
has restricted holonomy 
$$
 \R_{ + } ^{ \ast } \times SO(n_1) \times SO(n_2),
$$
except if $n_1 =n_2=2$, when the restricted holonomy group is either $ \R_{ + } ^{ \ast } \times SO(2) \times SO(2)$ or $\C ^{ \ast }$. 
\end{thm}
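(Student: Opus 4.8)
The plan is to bound $\mathrm{Hol}^{0}(D)$ from above using the parallel splitting, then to work in the local normal form of \cite{BeMo} to compute the curvature, and finally to analyse which subalgebras of $\R\oplus\mathfrak{so}(n_1)\oplus\mathfrak{so}(n_2)$ the holonomy algebra $\mathfrak{k}:=\mathfrak{hol}(D)$ can be. For the \emph{upper bound}: by \cite{BeMo} we may assume locally that $M=M_1\times M_2$, that $c=[g]$ with $g=g_1+e^{2h}g_2$ for metrics $g_i$ on $M_i$ and a function $h$ on $M$, that the Lee form of $D$ relative to $g$ is $\theta=-d_1h$ (the part of $dh$ tangent to $M_1$), and that the $c$-orthogonal splitting $TM=T_1\oplus T_2$ with $T_i=TM_i$ is $D$-parallel. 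Since $D$ preserves $c$ and this splitting and $\mathrm{Hol}^{0}(D)$ is connected, $\mathrm{Hol}^{0}(D)\subseteq\R_{+}^{\ast}\times SO(n_1)\times SO(n_2)$, with a single factor $\R_{+}^{\ast}$ acting by the same scalar on $T_1$ and $T_2$ (because $D$ preserves one conformal structure on $M$, not two independent ones). It remains to prove the reverse inclusion, up to the stated exception.

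For the \emph{curvature}: the first Bianchi identity for the torsion-free $D$, combined with the parallel splitting, gives (i) for $X,Y\in T_i$, $R^{D}(X,Y)$ annihilates $T_j$ $(j\ne i)$ and has zero trace part --- the Faraday $2$-form $F=d\theta$ being purely ``mixed'', i.e. a section of $T_1^{\ast}\wedge T_2^{\ast}$ --- so $R^{D}(X,Y)|_{T_i}\in\mathfrak{so}(n_i)$ is the curvature of the Weyl connection $D^{i}$ induced on a leaf, which is non-zero unless that leaf is flat; and (ii) for $X_1\in T_1$, $X_2\in T_2$, the trace part of $R^{D}(X_1,X_2)$ is $-F(X_1,X_2)\,\mathrm{Id}$, while its $\mathfrak{so}(n_i)$-components $S_i(X_1,X_2)$ are governed by $F$ via Bianchi (e.g. $S_1(X_1,X_2)$ sends $X_1$ to the orthogonal projection of $F(\cdot,X_2)^{\sharp}$ onto $X_1^{\perp}\cap T_1$). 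Since $D$ is non-closed, $F\not\equiv0$; hence by Ambrose--Singer $\mathfrak{k}$ projects onto $\R$ (from the trace parts in (ii)) and, by an elementary computation with decomposable $2$-vectors, also onto each $\mathfrak{so}(n_i)$.

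For the \emph{conclusion by cases}: if some $n_i\ge3$, then $\mathfrak{k}\cap(0\oplus\mathfrak{so}(n_i)\oplus0)$ is an ideal of $\mathfrak{k}$ that is non-zero --- it contains $R^{D}|_{\Lambda^{2}T_i}$, generically non-vanishing, and forced to be non-zero by the mixed curvature when $D^{i}$ is leafwise flat --- and, $\mathfrak{so}(n_i)$ being simple for $n_i\ne4$ and $\mathfrak{su}(2)\oplus\mathfrak{su}(2)$ for $n_i=4$, this ideal equals $0\oplus\mathfrak{so}(n_i)\oplus0$. Together with the analogous statement for the other factor (trivial if $n_j\le1$; for $n_j=2$ one uses the leafwise curvature $\kappa_j$, or, when $\kappa_j\equiv0$, that the diagonal coupling of the two $\mathfrak{so}(2)$'s seen below is incompatible with $\mathfrak{k}\supseteq0\oplus\mathfrak{so}(n_i)\oplus0$ and $\pi_{\R}(\mathfrak{k})=\R$) and with surjectivity onto $\R$, this forces $\mathfrak{k}=\R\oplus\mathfrak{so}(n_1)\oplus\mathfrak{so}(n_2)$, i.e. $\mathrm{Hol}^{0}(D)=\R_{+}^{\ast}\times SO(n_1)\times SO(n_2)$. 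If $n_1=n_2=2$, then $\R\oplus\mathfrak{so}(2)\oplus\mathfrak{so}(2)\cong\R^{3}$ is abelian, so $\mathfrak{k}$ is just the linear span of the curvature values along all $D$-parallel frames. In an orthonormal frame with $F=(F_{ij})\ne0$, the mixed curvatures give the four explicit vectors $(-F_{ij},\pm F_{kj},\pm F_{ik})$ and the leafwise curvatures give $(0,\kappa_1,0)$ and $(0,0,\kappa_2)$; a short linear-algebra discussion shows these span a proper subspace of $\R^{3}$ only if $\kappa_1\equiv\kappa_2\equiv0$ and, pointwise, $F$ is symmetric trace-free or $\C$-linear, in which case the span is the $2$-plane $\R(1,0,0)\oplus\R(0,1,\pm1)$, the Lie algebra of $\C^{\ast}\subseteq\R_{+}^{\ast}\times SO(2)\times SO(2)$. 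Hence $\mathrm{Hol}^{0}(D)$ is $\R_{+}^{\ast}\times SO(2)\times SO(2)$ or $\C^{\ast}$, and both occur (flat $g_1,g_2$ with $h$ of suitable mixed Hessian realise $\C^{\ast}$).

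The \emph{main obstacle} is exactly the last point: ruling out proper ``graph'' subalgebras of $\R\oplus\mathfrak{so}(n_1)\oplus\mathfrak{so}(n_2)$ that nevertheless surject onto all three summands (a diagonal $\mathfrak{su}(2)$ when $n_1=n_2=3$, or precisely $\mathfrak{c}^{\ast}$ in dimension $2+2$) cannot be done by formal Lie theory alone. It requires the explicit conformal-product curvature --- that the leafwise Weyl curvature has no $\R$- or $\mathfrak{so}(n_j)$-component and is generically non-trivial, and the precise dependence of the mixed curvature on $F$ --- together with a case distinction according to which of the $\mathfrak{so}(n_i)$ are abelian, simple, or share a simple factor.
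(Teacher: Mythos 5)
Your overall skeleton matches the paper's: restrict attention to the mixed curvature via Ambrose--Singer, use the symmetry-failure identity to write $R^D(X\wedge Y)=[F^{\sharp},X\wedge Y^{\flat}]+F(X,Y)\,\mathrm{Id}$ for $X\wedge Y\in T^{\ast}M_1\otimes T^{\ast}M_2$, extract the $\R$-factor from $F\not\equiv 0$, and then argue that the remaining pieces generate $\mathfrak{so}(n_1)\oplus\mathfrak{so}(n_2)$ except in dimension $2+2$. But that last step --- which is the entire mathematical content of the theorem --- is missing from your text. The paper isolates it as a purely algebraic lemma: for a non-zero $F\in V_1\otimes V_2\subset\Lambda^2(V_1\oplus V_2)$, the Lie algebra generated by $\{[F,X\wedge Y]\;:\;X\in V_1,\ Y\in V_2\}$ is all of $\mathfrak{so}(V_1)\oplus\mathfrak{so}(V_2)$ unless $\dim V_1=\dim V_2=2$; its proof is a page of explicit iterated commutators (splitting $V_2$ along $\mathrm{span}(Y_1,\dots,Y_{n_1})$ and its orthogonal complement, treating $\dim V_1\neq\dim V_2$ and $\dim V_1=\dim V_2\geqslant 3$ separately). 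You replace this with ``an elementary computation with decomposable $2$-vectors'' for surjectivity of the projections, and with the non-vanishing of the leafwise curvature $R^D|_{\Lambda^2T_i}$ --- ``generically non-vanishing, and forced to be non-zero by the mixed curvature when $D^i$ is leafwise flat'' --- for the non-triviality of the ideal $\mathfrak{k}\cap\mathfrak{so}(n_i)$. Neither claim is substantiated, and the second points in the wrong direction: the paper never invokes the leafwise curvature (which can perfectly well vanish identically, e.g.\ flat leaves as in the Einstein--Weyl examples of Section 4), precisely because the mixed curvature alone, i.e.\ $F$ alone, already generates everything via iterated brackets. Your closing paragraph concedes that ruling out graph subalgebras ``cannot be done by formal Lie theory alone'' and requires ``a case distinction'' --- that case distinction \emph{is} the proof, and it is not supplied.

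Even if one tried to complete your route, there are further holes: (1) for $n_i=4$ the ideal argument only yields that $\mathfrak{k}\cap(0\oplus\mathfrak{so}(4)\oplus 0)$ is $0$, one of the two $\mathfrak{su}(2)$ factors, or all of $\mathfrak{so}(4)$, and you do not exclude a single $\mathfrak{su}(2)$; (2) for $n_j=2$ paired with $n_i\geqslant 3$ your parenthetical about ``the diagonal coupling of the two $\mathfrak{so}(2)$'s'' does not apply (there is only one $\mathfrak{so}(2)$), so that sub-case is unaddressed; (3) surjectivity of $\pi_{\R}$ on $\mathfrak{k}$ is weaker than $\R\cdot\mathrm{Id}\subset\mathfrak{k}$ --- the paper obtains the latter directly by evaluating the curvature identity at $\omega=F$ itself, giving $R^D(F)=\|F\|^2\,\mathrm{Id}$, since $F$ is a mixed form. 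Your $2+2$ discussion is reasonable in outline (and your identification of when the span degenerates to the Lie algebra of $\C^{\ast}$ is more explicit than the paper's, which defers to \cite{BeMo} for both the lower bound on the dimension and the examples realizing $\C^{\ast}$), but it does not compensate for the missing generation lemma in all other dimensions.
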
 
Finally, in section $4$ we will discuss Einstein-Weyl products, which are reducible Weyl manifolds whose trace-free symmetric part of the Ricci tensor vanishes. In this last section we prove that there are no non-closed Einstein-Weyl product structures $(M,c,D)$ except when $n=4$ by generalizing a proposition in \cite{BeMo}.
\begin{satz}
A non-closed conformal product $(M, c , D )$ with $M=M^{n_1}_1 \times M^{n_2}_2$, $c=[g_1+e^{2f}g_2]$ and $D$ the adapted Weyl structure is Einstein-Weyl if and only if it is locally isomorphic to a conformal product $(M_1 \times M_2 ,c= [g_1 +e^{2f} g_2],D) $, where $M_1 $ and $M_2 $ are open sets of $\R ^2 $, $g_i$ is the flat metric on $ M_i$ and the function $f: M_1 \times M_2 \subset \R ^4 \to \R $ satisfies the $Toda$-type equation 
$$
e^{2f}(\partial _{11} f + \partial _{22} f) + \partial _{33} f + \partial
 _{44} f =0.
$$ 
\end{satz}

\section{Preliminaries}
For the theory of Weyl derivatives it is convenient to work with densities.
\begin{dfn}
Let $V$ be a real $n$-dimensional vector space and $k$ a real number. A homogeneous map $\mu :\Lambda ^n V \backslash \{ 0 \} \to \R$ with the property $\mu(\lambda \omega )= |\lambda |^{-k/n} \mu(\omega )$ for all $\lambda \in \R ^{\ast}$ is called a \em density of weight \em $k$ or a $k$-\em density \em. 
\end{dfn}
The set of all densities of weight $k$ forms a one dimensional vector space 
$L^k(V)$ or simply $L^k$.
This vector space is oriented, since a non-trivial density takes either positive or negative values. Furthermore $L^k$ naturally carries the representation $\lambda.\mu = |\lambda |^k \mu $ of the center of the group $GL(V)$ or equivalently the representation $A.\mu= |det A|^{k/n}\mu$ of $GL(V)$. \\ 
Let $M^n$ be a $n$-dimensional manifold, then the \em density line bundle \em $L^k=L^k(TM)$ of $M$ is defined to be the bundle whose fibre at $x\in M$ is $L^k(T_xM)$ or equivalently to be the associated bundle:
$$
L^k = GL(M)\times _{| det|^{k/n}} \R.
$$     
We note, that $L^0 = \R$. If $k$ is a natural number, then $L^k$ is the $k$-tensor product of $L^1=:L.$ The dual of $L$ is isomorphic to $L^{-1}$. Moreover we have the following isomorphisms $L^{k_1} \otimes L^{k_2} \simeq L^{k_1 +k_2}$, $(L^k)^p \simeq (L^p)^k$ and  if $M$ is oriented $L^{-n}\simeq |\Lambda ^n(T^{\ast}M)| = \delta M$, which is the bundle of \em densities \em on $M$.
Elements of $L^1$ may be thought of as scalars with dimension of $(length)$. In general the tensor bundle $TM^{\otimes i} \otimes T^{\ast} M^{\otimes j}\otimes L^k$ and any subbundle, quotient bundle, section or element will be said to have weight $i-j+k$, or dimension of $(length)^{i-j+k}$. \\
The real line bundles $L^k$ are trivializable since they are orientable. However, there is no preferred orientation on $L^k$, except for $k=0$.
\begin{dfn}
A non vanishing, usually positive section of $L^k$, for $k \neq 0$
will be called a \em length scale \em or \em gauge \em of weight $k$ or with dimension of $(length)^k$.  
\end{dfn}   
Real numbers $\R=L^0$ are weightless and dimensionless. Vectors of $T_xM$, $x \in M$ have weight $+1$ and describe translations of dimension $(length)$. The sections of $L^{-n}$ over $M$ play the role of natural integrands. The tensor product $\Lambda ^n TM \otimes L^{-n}$ is the weightless space of pseudoscalars. This one dimensional space naturally carries a norm given by $|\mu \otimes \omega |:= |\mu (\omega )|$. The two orientations of $T_xM$, $x \in M$ are in one-to-one correspondence with the two unit elements of $\Lambda ^n TM \otimes L^{-n}$. 

A conformal structure on a smooth manifold $M$ is an equivalent class $c$ of Riemannian metrics, where two Riemannian metrics $g,\tilde{g} \in C^{\infty }(M,S^2(T^{\ast }M)) $ are equivalent if $\tilde{g}=e^{2f}g$ for a smooth function $f$ on M. In conformal geometry it is interesting to not just look at a conformal structure as equivalence class but rather like an algebraic structure in the following sense:
\begin{dfn} 
A \em conformal structure \em on $M$ is a symmetric positive definite bilinear form $c$ on $TM \otimes L^{-1}$, or equivalently, a symmetric positive definite bilinear form on $TM$ with values in $L^2$.
\end{dfn}
A conformal structure can also be seen as a reduction $CO(M)$ of $GL(M)$ to the conformal group $CO(n) \simeq \R ^+ \times O(n) \subset GL(n, \R)$, where the isomorphism is obtained by identifying the positive real numbers $\R^+$ with the subgroup of dilatations. \\
More precisely it is a section $c \in C^{\infty }(M,S^2(T^{\ast }M)\otimes L^2)$ which is everywhere positive definite. Hence once a conformal structure $c$ fixed, there is a one-to-one correspondence between positive sections $l$ of $L$ and Riemannian metrics on $M$ : $c = g\otimes l^2. $  

In conformal geometry, the role of the Levi-Civita connection in Riemannian geometry is played by the affine space of Weyl connections, which are torsion-free and preserve the conformal structure.   
\begin{dfn}
A \em Weyl connection \em on a conformal manifold is torsion-free connection on $CO(M)$, or equivalently a torsion-free connection on $GL(M)$ induced by an covariant derivative $D$ on the bundle $S^2(T^{\ast}M)\otimes L^2$ which preserves the conformal structure, i.e. $Dc=0$. 
\end{dfn}
The existence and uniqueness of the Levi-Civita connections is a special case of the following central result, which is the fundamental theorem in conformal geometry and was proven by H.Weyl in \cite{Weyl}.
\begin{thm}
There is a one-to-one correspondence between Weyl connections of $TM$ and covariant derivatives on $L$, induced by restriction to the application which associates to each linear connection $D$ on $TM$ a connection $\nabla ^D$ on $L$.
\end{thm}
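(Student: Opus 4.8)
The plan is to exploit the fact that both the set of Weyl connections on $TM$ and the set of covariant derivatives on $L$ are affine spaces modeled on the same vector space $\Omega^1(M)$, and that the assignment $D\mapsto\nabla^D$ is an affine map whose linear part is a nonzero multiple of the identity, hence an isomorphism. Recall first that any linear connection $D$ on $TM$ induces, through the associated-bundle construction applied to the character $|\det|^{k/n}$, a covariant derivative $\nabla^D$ on each density bundle $L^k$; this is the ``application'' in the statement, and restricting it to Weyl connections is the map we must show is bijective. I would fix once and for all a metric $g\in c$ with Levi-Civita connection $\nabla^g$, and let $l_g\in C^\infty(M,L)$ be the gauge determined by $c=g\otimes l_g^2$; by construction the connection induced on $L$ by $\nabla^g$ makes $l_g$ parallel.

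Next I would parametrize the Weyl connections. Writing an arbitrary torsion-free connection as $D=\nabla^g+A$ with $A\in C^\infty(M,T^\ast M\otimes\operatorname{End}(TM))$ symmetric in its lower indices, the induced connection on $L^2$ differs from that of $\nabla^g$ by $\tfrac{2}{n}\operatorname{tr}(A)$, so that $l_g^2$ is no longer parallel. Expanding $Dc=D(g\otimes l_g^2)=0$ and using $\nabla^g g=0$ then yields the pointwise condition
$$
g(A_XY,Z)+g(Y,A_XZ)=\tfrac{2}{n}\operatorname{tr}(A_X)\,g(Y,Z),
$$
i.e. the $g$-symmetric part of each endomorphism $A_X$ equals the scalar $\tfrac{1}{n}\operatorname{tr}(A_X)\,\id$. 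A Koszul-type manipulation using the lower-index symmetry $A_XY=A_YX$ shows that this forces
$$
A_XY=\Gamma_\theta(X,Y):=\theta(X)Y+\theta(Y)X-g(X,Y)\theta^\sharp
$$
for a unique $1$-form $\theta$. Hence the Weyl connections are exactly the $\nabla^g+\Gamma_\theta$, $\theta\in\Omega^1(M)$, and they form an affine space modeled on $\Omega^1(M)$; the same is true of the covariant derivatives on $L$, the difference of any two being a $1$-form.

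It then remains to compute the linear part of $D\mapsto\nabla^D$. Since the induced connection on $L$ sees only the trace $\tfrac{1}{n}\operatorname{tr}(A_X)$, evaluating the three traces in $\Gamma_\theta$ gives $\operatorname{tr}(\Gamma_\theta)_X=n\,\theta(X)$, whence $\nabla^D=\nabla^g+\theta$ on $L$. In the two affine parametrizations the map therefore reads $\theta\mapsto\theta$, a nonzero multiple of the identity (equal to the identity in the present normalization), hence an affine isomorphism. Concretely this gives both halves at once: if $\nabla^{D}=\nabla^{D'}$ on $L$ then the associated $1$-forms coincide and $D=D'$ (injectivity), while any connection on $L$ can be written $\nabla^g+\eta$ for a unique $\eta\in\Omega^1(M)$, and $D:=\nabla^g+\Gamma_\eta$ is then a Weyl connection with $\nabla^D=\nabla^g+\eta$ (surjectivity).

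The main obstacle is the parametrization step: one must verify that the torsion-free and conformal-compatibility constraints together cut the solution space down to precisely the $1$-forms $\Gamma_\theta$, which is the Koszul-type argument above, and one must keep careful track of the connection induced on $L^2$, since $\nabla^D$ already enters the defining equation $Dc=0$. Once the trace $\operatorname{tr}(\Gamma_\theta)=n\,\theta$ is in hand, bijectivity follows formally from the observation that $D\mapsto\nabla^D$ is affine with invertible linear part.
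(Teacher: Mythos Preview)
Your argument is correct, but the route is genuinely different from the paper's. The paper proceeds intrinsically via a generalized Koszul formula: torsion-freeness together with $Dc=0$ is shown to be equivalent to
\[
2c(D_XY,Z)=\nabla^D_X(c(Y,Z))+\nabla^D_Y(c(X,Z))-\nabla^D_Z(c(X,Y))+c([X,Y],Z)+c([Z,X],Y)-c([Y,Z],X),
\]
so that a given covariant derivative $\nabla^D$ on $L$ \emph{defines} $D$ on $TM$ directly by this formula, and uniqueness is immediate. You instead fix a reference metric $g$, identify both the Weyl connections and the connections on $L$ with affine spaces modeled on $\Omega^1(M)$, and check that $D\mapsto\nabla^D$ becomes $\theta\mapsto\theta$ in these parametrizations. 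The paper's approach is gauge-independent and exactly parallels the classical existence/uniqueness proof for the Levi-Civita connection; your approach is more computational but has the advantage of producing, along the way, the explicit formula $D_XY=\nabla^g_XY+\theta(X)Y+\theta(Y)X-g(X,Y)\theta^\sharp$ and the trace identity $\operatorname{tr}(\Gamma_\theta)_X=n\,\theta(X)$, both of which the paper derives separately after the theorem. The one step you leave slightly implicit is the ``Koszul-type manipulation'' deriving $A=\Gamma_\theta$ from the symmetric-part condition plus $A_XY=A_YX$; this is standard, but note that it is essentially the same cyclic-permutation trick underlying the paper's Koszul formula, so the two proofs share a common core.
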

\begin{proof}
Every connection on $CO(M)$ induces covariant derivatives $D$ on $TM$ and $\nabla ^D$ on $L$. 
A Weyl connection is characterized by being torsion-free, i.e. $T(X,Y):=D_X Y-D_Y X +[X,Y]=0$ and by preserving the conformal structure, i.e. $Dc=0$. 
Then as in Riemannian geometry, these two relations are equivalent to the generalized $Koszul$ $formula$:
\begin{align*}
2c(D_X Y , Z)  = & \nabla ^D _X(c(Y,Z)) +  \nabla ^D _Y(c(X,Z)) -  \nabla ^D _Z(c(X,Y)) \\ & + c([X,Y],Z) +c([Z,X],Y) -c([Y,Z],X),
\end{align*}  
for all vector fields $X,Y,Z$ on $M$. Hence, every covariant derivative $\nabla ^D$ on $L$ induces by the formula a covariant derivative on $TM$, and thus  on $CO(M)$, which is clearly torsion-free and preserves the conformal structure.
\end{proof}
A conformal manifold $(M,c)$ equipped with a Weyl derivative $D$ is called a \em Weyl manifold \em $(M,c,D)$.
Henceforth we denote with $D$ the Weyl connection and its associated linear connection on $L$. \\
Sometimes it is useful to compare two Weyl derivatives on a conformal manifold. Consider the Weyl connection $D$ and $D'$ on $(M,c)$ seen as a covariant derivative on $L$. The difference $D-D'$ is a $1$-form on $TM$ with values in $End (L) = L^{\ast } \otimes L = \R $. Hence there exists a real $1$-form $\theta \in \Omega ^1 (M,\R ) $, such that 
\begin{align} \label{Dl-Dl} 
D'l=Dl + \theta \otimes l \ \ \ \forall l \in C^{\infty } (L). 
\end{align}     
These Weyl connections seen as connections on $TM$ have a $\mathfrak{co} (TM)$ valued $1$-form $\Gamma $ as their difference, i.e. $D'-D =\Gamma$.  
With (\ref{Dl-Dl}) and the Koszul formula we can express $\Gamma $ in terms of the $1$-form $\theta$:
$$
D'_X Y = D_X Y + \theta (X) Y + \theta (Y) X - c(X,Y)\theta^{\sharp},
$$   
for all vector fields $X,Y$ on $M$. Here $\theta^{\sharp}$ is the section of $TM \otimes L^{-2}$ defined by $\theta  (X) = c(\theta^{\sharp} , X)$ for all $X \in TM$. 
Since for all Riemannian metrics $g\in c$, the Levi-Civita connection $D^g$ is a Weyl connection, for all Weyl connections $D$ and all $g \in c$ one can write
\begin{align} \label{Dl-Dl2} 
Dl=D^gl + \theta _g \otimes l \ \ \ \forall l  \in C^{\infty } (L).
\end{align}
The $1$-form $\theta _g$ is called $Lee$ $form$ of $D$ with respect to $g$. The gauge $l_g$ corresponding to $g=c\otimes l_g ^{-2}$ is $D^g$-parallel. This follows, since $D^g$ seen as the Levi-Civita connection preserves the metric $g$, i.e. $D^g g=0$ and $D^g$ seen as Weyl connection preserves the conformal structure $c$, i.e. $D^g c=0$: 
$$
0=D^g g= D^g (c \otimes l^{-2}_g) = D^g c \otimes  l_g^{-2} + c\otimes D^g  l_g^{-2}, 
$$
i.e. $ D^g l_g =0$. Now we see, that $\theta _g$ is the connection form of $D$ on $L$ with respect to the gauge $l_g$:
$$
D_X l_g = \theta _g (X) l_g \ \ \ \forall X \in TM,
$$
equivalently $D_X g = -2\theta _g(X) g$ for all vector fields $X$ on $M$. Hence we will call Weyl connections $closed$ (resp. $exact$ ) if the Lee form $\theta _g$ is closed (resp. exact) for all $g\in c$. \\
Furthermore we can establish the relation between the two Lee forms of a Weyl connection with respect to two Riemannian metrics in $c$.
Let us consider the metrics $g=c\otimes l_g^{-2}$ and $\tilde{g}=c\otimes l_{\tilde{g}}^{-2}$ in $c$, then there exists a function $f \in C^{\infty}(M)$, such that $\tilde{g} =e^{2f}g$. First we see that the length scales differ by $e^f$, i.e. $l_{\tilde{g}}=e^{-f}l_g$. Then, for the respective Lee forms we derive that 
$$
\theta _{\tilde{g}} \otimes l_{\tilde{g}} =Dl_{\tilde{g}} =D (e^{-f}l_g)=-df e^{-f} l_g + e^{-f}Dl_g= -df e^{-f} l_g + e^{-f}\theta _g \otimes l_g =(\theta _g -df)\otimes l_{\tilde{g}},
$$ 
i.e. $\theta _{\tilde{g}}= \theta _g -df $. \\
The curvature $F^D$ of a Weyl connection seen as a covariant derivative on $L$ is a real $2$-form on $TM$. Let us fix a gauge $l$ and consider its corresponding metric $g\in c$, then 
\begin{align*}
F^D(X,Y)l&= D_X(D_Y l) -D_Y(D_X l) -D_{[X,Y]}l \\
&= D_X(\theta _g (Y)l) -D_Y(\theta _g (X) l) -\theta _g ([X,Y])l \\
&= X\cdot \theta _g (Y)l +\theta _g (Y)\theta _g (X)l -Y\cdot \theta _g (X) l - \theta _g (X)\theta _g (Y)l -\theta _g ([X,Y])l \\
&= X\cdot \theta _g (Y)l  -Y\cdot \theta _g (X) l-\theta _g ([X,Y])l \\
&= d\theta _g (X,Y)l.		
\end{align*}   
One notes that the closed $2$-form $F^D$ does not depend on the metric $g$ defining $\theta$.
\begin{rmk}
The original gauge theory of \em metrical relations \em introduced by  Weyl in \cite{Weyl} is the theory of Weyl derivatives, which is a gauge theory with gauge group $\R ^+$. This theory is a geometrization of classical electromagnetism. Originally Weyl interpreted a Weyl derivative as electromagnetic potential and its curvature as electromagnetic field, which then automatically satisfies the first Maxwell equation,
($dF^D=0$). 
However, as a model for electromagnetism, the gauge theory of metrical relations was subsequently rejected in favour of a $U(1)$ gauge theory.
\end{rmk}
 For Weyl's first attempts of geometrization, we will call the closed $2$-form $F^D$ \em Faraday form\em.
\begin{satz}
A Weyl structure $D$ is closed, respectively exact, if and only if $D$ is flat seen as a linear connection on $L$, i.e. $F^D=0$, respectively if $D$ admits a $D$-parallel global section.
\end{satz}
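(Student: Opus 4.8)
The plan is to unwind the definitions, relying on the two identities established just before the statement: the curvature formula $F^D(X,Y)l = d\theta_g(X,Y)\,l$ for a gauge $l$ with associated metric $g\in c$, and the transformation rule $\theta_{\tilde g} = \theta_g - df$ whenever $\tilde g = e^{2f}g\in c$.

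First I would handle the \emph{closed} case. Fixing any $g\in c$, the curvature computation above shows $F^D = d\theta_g$ as a real $2$-form on $TM$. Since for $\tilde g = e^{2f}g$ one has $d\theta_{\tilde g} = d\theta_g - d(df) = d\theta_g$, the exterior derivative of the Lee form does not depend on the chosen metric in the conformal class. Hence $\theta_g$ is closed for one $g\in c$ if and only if it is closed for every $g\in c$, and this in turn holds if and only if $F^D = d\theta_g = 0$, i.e. $D$ is flat as a linear connection on $L$. This gives the first equivalence.

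Next, the \emph{exact} case. Every positive section of $L$ is of the form $l' = e^{-h}l_g$ for some $h\in C^{\infty}(M)$, with associated metric $\tilde g = e^{2h}g$; from $D_X l_g = \theta_g(X)\,l_g$ one computes $D_X l' = (\theta_g - dh)(X)\,l' = \theta_{\tilde g}(X)\,l'$. Thus $l'$ is $D$-parallel exactly when $\theta_g = dh$, i.e. when $\theta_g$ is exact with primitive $h$; conversely, if $\theta_g = dh$, then $e^{-h}l_g$ is a $D$-parallel gauge. As before, $\theta_{\tilde g} = \theta_g - df$ shows that exactness of the Lee form is independent of the metric in $c$. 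To match the precise wording of the statement, I would finally observe that on a connected manifold a $D$-parallel section of the line bundle $L$ which does not vanish identically is nowhere zero, by uniqueness of parallel transport along paths, hence --- up to an overall sign, $L$ being an oriented real line bundle --- a positive gauge. Combining these facts, $D$ admits a nontrivial $D$-parallel global section if and only if some (equivalently every) $\theta_g$ is exact, i.e. if and only if $D$ is exact.

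The argument is essentially bookkeeping once the identities $F^D = d\theta_g$ and $\theta_{\tilde g} = \theta_g - df$ are in hand. The only step requiring genuine care --- the one I would regard as the main (if mild) obstacle --- is the last one: identifying an arbitrary nontrivial $D$-parallel section of $L$ with a positive $D$-parallel length scale up to sign, which uses connectedness of $M$ together with uniqueness of parallel transport. I expect no serious difficulty.
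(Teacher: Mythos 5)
Your proof is correct and follows essentially the same route as the paper: both rest on the identities $F^D = d\theta_g$ and $\theta_{\tilde g} = \theta_g - df$ established just before the statement, together with the observation that a parallel gauge forces $\theta_g$ to be exact. The paper's own proof is far terser (it only writes out the exact case, in one direction), so your version is simply a more complete write-up of the same argument; the extra care you take with the nowhere-vanishing of a nontrivial parallel section is a reasonable addition the paper omits.
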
 
\begin{proof}
Let $l_g \neq 0 $ be a $D$-parallel section, i.e.
$
0=Dl_g = \theta _g \otimes l_g,
$
hence $\theta _g = 0$ and $\theta _{\tilde{g}} =df$, for all $\tilde{g} = e^{2f}g$ in $c$.
\end{proof}
\begin{cor}
A Weyl connection $D$ is locally, respectively globally the Levi-Civita connection of a metric in the conformal class if and only if $D$ is closed, respectively exact.   
\end{cor}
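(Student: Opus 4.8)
The plan is to reduce both equivalences to the observation, already implicit in the excerpt, that $D$ coincides with the Levi-Civita connection $D^g$ of a metric $g \in c$ precisely when the associated Lee form $\theta_g$ vanishes. Indeed, by (\ref{Dl-Dl2}) we have $Dl = D^g l + \theta_g \otimes l$, so $D$ and $D^g$ agree as connections on $L$ — and hence on $TM$, by the fundamental theorem — if and only if $\theta_g = 0$. Alongside this I would use the two facts established above: the gauge $l_g$ with $g = c \otimes l_g^{-2}$ satisfies $D_X l_g = \theta_g(X)\, l_g$, and under a conformal change $\tilde g = e^{2f} g$ the Lee forms transform by $\theta_{\tilde g} = \theta_g - df$.

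For the \emph{global/exact} equivalence, suppose first that $D = D^g$ globally for some $g \in c$. Then $\theta_g = 0$, so $l_g$ is a nowhere-vanishing global $D$-parallel section, and by the previous Proposition $D$ is exact. Conversely, if $D$ is exact, the Proposition furnishes a non-trivial global $D$-parallel section $l$ of $L$; since parallel transport is an isomorphism, a non-trivial parallel section over a connected manifold vanishes nowhere, so $l$ is a gauge, which we may take positive. The corresponding metric $g = c \otimes l^{-2}$ then satisfies $D_X l = \theta_g(X)\, l = 0$, forcing $\theta_g = 0$ and hence $D = D^g$ globally.

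For the \emph{local/closed} equivalence, the forward direction is immediate: if $D = D^g$ on a neighbourhood of each point, then $\theta_g = 0$ there, so the Faraday form $F^D = d\theta_g$ vanishes near every point and therefore identically, whence $D$ is closed by the Proposition. For the converse, assume $D$ is closed, i.e. $F^D = 0$. Fixing any $g \in c$ gives $d\theta_g = F^D = 0$, so $\theta_g$ is closed, and the Poincar\'e lemma yields on any simply connected neighbourhood $U$ a function $f$ with $\theta_g = df$ on $U$. Setting $\tilde g = e^{2f} g$ on $U$ then gives $\theta_{\tilde g} = \theta_g - df = 0$, exhibiting $D = D^{\tilde g}$ locally as the Levi-Civita connection of $\tilde g$.

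This corollary is genuinely a short formal consequence of the preceding Proposition, so there is no substantial obstacle; the only points demanding care are ensuring that the parallel section produced in the exact case is a bona fide gauge — nowhere vanishing and positive, so that $g = c \otimes l^{-2}$ is an honest metric in $c$ — which follows from connectedness and the uniqueness of parallel transport, and the passage from a local primitive of $\theta_g$ to an actual metric in the closed case, which is exactly the Poincar\'e lemma combined with the transformation rule $\theta_{\tilde g} = \theta_g - df$.
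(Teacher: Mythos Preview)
Your proof is correct and follows essentially the same approach as the paper: both hinge on the equivalence $D=D^g \Leftrightarrow \theta_g=0$ together with the preceding Proposition, and both reduce the closed case to the exact one via the Poincar\'e lemma. Your version is simply more thorough, treating both implications explicitly and spelling out the closed case via the transformation rule $\theta_{\tilde g}=\theta_g-df$, whereas the paper condenses everything into the one-line observation that a closed form is locally exact.
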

\begin{proof}
Since a closed form is locally exact, we consider that $D$ is exact. Above we showed that $D$ is exact iff $\theta _g=0$ for a metric $g$ in $c$. Thus from (\ref{Dl-Dl2}) we obtain, that $D=D^g$, which is the Levi-Civita connection of $g$.
\end{proof}
This is why Riemannian geometry can be seen as special case of conformal geometry, i.e. Riemannian geometry is the geometry of an exact Weyl derivative on a conformal manifold.\\
Consider the curvature $R^D$ of a Weyl connection $D$ action on $TM$,
$$
R^D _{X,Y}Z =[D_X, D_Y ] Z  -D_{[X,Y]}Z
$$
with $X,Y$ and $Z$ vector fields on $M$. In contrast to the Riemann case, $R^D$ is not symmetric by pairs, and with $D=D^g +\Gamma $ the curvature decomposes into
$$
R^D _{X,Y}Z = R^g _{X,Y}Z + [D^g_X, \Gamma _Y ]Z + [\Gamma _X , D^g _Y]Z+[\Gamma _X, \Gamma _Y]Z -\Gamma _{[X,Y]} Z 
$$
with $X,Y$ and $Z$ vector fields on $M$ and $R^g$ the curvature of $D^g$.
Regarding $R^D$ as a section of $T^{\ast }M ^{\otimes 4} \otimes L^2$ by the formula $R^D (X,Y,Z,T)= c(R^D _{X,Y}Z,T)$, one can calculate that the symmetry failure of $R^D$ is measured by the Faraday form $F$ of $D$,
\begin{gather}  
\begin{aligned} \label{RofD}
R^D(X,Y,V,W )-R^D(V,W,X,Y) &= (F(X) \wedge Y -F(Y) \wedge X)(V,W) \\
& \ \ \ + F(X,Y)c(V,W)-F(V,W)c(X,Y),
\end{aligned}
\end{gather}
where the wedge product is defined for a weighted endomorphism $A\in End(TM) \otimes L^k$, by:
$$
(A(X)\wedge Y)(Z,T):=c(A(X),Z)c(Y,T)- c(A(X),T)c(Y,Z).
$$ 
For later use, the Ricci tensor of a Weyl structure $D$ is defined by (\cite{Gaud}, although we use a different sign convention for the curvature tensor):
$$
Ric^D (X,Y):= \frac{1}{2} \sum ^n _{k=1}(g(R^D_{X,e_k} e_k,Y) -g(R^D_{X,e_k} Y,e_k)),
$$ 
where $g$ is an arbitrary metric in the conformal class and $\{e_k \}$ is a local $g$-orthogonal frame.
\section{The holonomy classification}
In this section we want to prove ou main result, but first we give a short introduction to conformal product structure and holonomy.
For this we consider a conformal map $f$ between  $(M,c)$ and $(N,c')$. We call $f$ a \em conformal submersion \em if its differential restricted to $(ker df)^{\bot}$ is a conformal isomorphism in every point. 
\begin{dfn}
A conformal structure on the manifold $M:= M_1 \times M_2$ is a \em conformal product structure \em of $(M_1,c_1)$ and $(M_2,c_2)$ if and only if the canonical submersion $p_1: M \to M_1$ and $p_2: M \to M_2$ are orthogonal conformal submersions.
\end{dfn}  
In \cite{BeMo} the following correspondence between conformal products and reducible holonomy was shown:
\begin{thm}
A conforma manifold $M$ has (local) conformal product structure if and only if it carries a Weyl structure with reducible holonomy. 
\end{thm}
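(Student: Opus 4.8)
The plan is to translate both implications into statements about $D$-parallel distributions, using only that a Weyl connection is torsion-free and preserves the conformal structure $c$.

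\emph{From a reducible Weyl structure to a conformal product.} Suppose the holonomy representation of $D$ at some point leaves invariant a subspace of $T_{x_0}M$ of dimension strictly between $0$ and $n$. By the holonomy principle this subspace is the fibre of a $D$-parallel distribution $E_1\subset TM$, and since $Dc=0$ its $c$-orthogonal complement $E_2:=E_1^{\perp}$ is $D$-parallel as well, with $TM=E_1\oplus E_2$. Torsion-freeness gives $[\Gamma(E_i),\Gamma(E_i)]\subset\Gamma(E_i)$, so both distributions are integrable, and applying the Frobenius theorem to both we may assume, near any point, that $M=M_1\times M_2$ with $TM_1\leftrightarrow E_1$ and $TM_2\leftrightarrow E_2$. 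To see that the projections $p_i$ are conformal submersions, observe that $D$-parallel transport along a curve contained in a leaf of $E_2$ preserves $E_1$ and preserves $c$, and that (because $D$ is torsion-free and both distributions are parallel) the $p_1$-basic sections of $E_1$ are $D$-parallel along such curves; hence, under the isomorphism $dp_1\colon E_1\to p_1^{*}TM_1$, this parallel transport is the identity, so $c|_{E_1}$ descends to a conformal structure $c_1$ on $M_1$ making $dp_1|_{E_1}\colon(E_1,c|_{E_1})\to(TM_1,c_1)$ a pointwise conformal isomorphism. The symmetric argument handles $p_2$, and $E_1\perp E_2$ gives orthogonality; thus $(M,c)$ is a local conformal product.

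\emph{From a conformal product to a reducible Weyl structure.} Starting from orthogonal conformal submersions $p_1,p_2$, put $E_1:=\ker dp_2=(\ker dp_1)^{\perp}$ and $E_2:=\ker dp_1$; these are complementary, orthogonal and integrable, so locally $M=M_1\times M_2$. Since $p_1$ is a conformal submersion one can rescale a representative of $c$ so that $c=[g]$ with $g=g_1+e^{2f}g_2$, where $g_i$ is the pullback of a metric on $M_i$ and $f\in C^{\infty}(M)$. Define the Weyl connection $D$ by prescribing its Lee form $\theta=\theta_g$ to be the $1$-form with $\theta(X)=-df(X)$ for $X\in\Gamma(E_1)$ and $\theta(V)=0$ for $V\in\Gamma(E_2)$, that is, $D_Z W = D^g_Z W + \theta(Z)W + \theta(W)Z - g(Z,W)\theta^{\sharp}$. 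A direct computation (in product coordinates, or through the O'Neill tensors of the Riemannian submersion $p_1$) shows that the only part of $D^g$ that fails to preserve the decomposition $E_1\oplus E_2$ is a term proportional to the $E_1$-component of $df$, and that this term is exactly cancelled by the $\theta$-terms; hence $D_Z W\in\Gamma(E_i)$ whenever $W\in\Gamma(E_i)$. Therefore $E_1$ and $E_2$ are $D$-parallel, the holonomy representation of $D$ preserves $E_1$, and so is reducible. (The same computation shows $D$ is the unique Weyl connection with this property, i.e.\ the adapted Weyl structure of the product.)

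\emph{Main obstacle.} The delicate step is the second direction: one must guess the right Lee form, namely $\theta=-df$ on $E_1$ and $0$ on $E_2$, and then verify that it makes $D$ preserve the splitting. This forces one to work with the fact that $g=g_1+e^{2f}g_2$ is \emph{not} a genuine Riemannian product unless $f$ splits as the sum of a function on $M_1$ and a function on $M_2$, so that $D^g$ by itself does \emph{not} preserve $E_1$ and $E_2$; all of the cross terms produced by $D^g$ must be written out and shown to cancel against the $\theta$-terms. By comparison the first direction is soft, the one subtlety being that ``conformal submersion'' is a pointwise condition relative to conformal structures on base \emph{and} total space, which is precisely why the invariance of $c|_{E_1}$ along the leaves of $E_2$ must be established in order to produce the conformal structure $c_1$ on $M_1$.
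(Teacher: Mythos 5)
Your proof is correct. Note that the paper itself does not prove this statement -- it is quoted from \cite{BeMo} -- and your argument is essentially the standard one from that reference: holonomy-invariant subspace $\to$ pair of $D$-parallel, hence integrable, orthogonal distributions $\to$ conformal submersions (using that basic sections are parallel along the fibres), and conversely the explicit adapted Weyl structure with Lee form $-d_1f$ for $g=g_1+e^{2f}g_2$. The only place where you assert rather than show is the Koszul-formula verification that the $\theta$-terms cancel the cross terms of $D^g$; I checked that this computation does go through (the sole off-diagonal term is $g(D^g_UV,X)=-(Xf)\,g(U,V)$ for $U,V$ tangent to $M_2$ and $X$ tangent to $M_1$, cancelled by $-g(U,V)\theta^{\sharp}$), so no gap remains.
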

This establishes the existence of a unique \em adapted Weyl \em structure $D$ of the conformal product $(M,c)$. \\
\begin{dfn}
A conformal product $(M,c) $ is called a \em closed \em conformal product if the adapted Weyl structure is closed. 
\end{dfn} 
 
We want to classify the possible (local) holonomy groups of Weyl connections. As mentioned in the introduction, in the case where $D$ is closed, the Berger-Simons theorem applies. Furthermore Merkulov-Schwachh\"ofer's classification \cite{MeSc} of possible holonomies of irreducible torsion-free connections leaves us with the case when $D$ is non-closed and irreducible. Hence, consider $D$ an adapted Weyl structure of a non-closed conformal product. 
\begin{thm}\label{thm}
A non-closed conformal product $(M_1 \times M_2, c , D )$, with $n_i := dim(M_i)$
has restricted holonomy 
$$
 \R_{ + } ^{ \ast } \times SO(n_1) \times SO(n_2),
$$
except if $n_1 =n_2=2$, when the restricted holonomy group is either $ \R_{ + } ^{ \ast } \times SO(2) \times SO(2)$ or $\C ^{ \ast }$. 
\end{thm}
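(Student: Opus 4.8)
The plan is to reduce to the local description of conformal products recalled above and then combine an Ambrose--Singer argument with a linear-algebra analysis of the curvature.

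\emph{Step 1 (normal form and a priori bound).} By the results of \cite{BeMo} one may work locally in coordinates where $c=[g]$ with $g=g_1+e^{2f}g_2$, $g_i$ a metric on $M_i$ pulled back to $M=M_1\times M_2$ and $f\in C^\infty(M)$, and where $D$ is the Weyl connection with Lee form $\theta_g=-d_1f$ (the part of $df$ annihilating $TM_2$); the Koszul-type formula shows directly that this $D$ is torsion-free, preserves $c$, and preserves both complementary distributions $TM_1,TM_2$. Since $D$ preserves the orthogonal splitting $TM\otimes L^{-1}=(TM_1\otimes L^{-1})\oplus(TM_2\otimes L^{-1})$ together with the genuine Euclidean metric $c$ on $TM\otimes L^{-1}$, while its parallel transport on $L$ lies in $\R_{+}^{\ast}$, one gets $\mathrm{Hol}^0(D)\subseteq\R_{+}^{\ast}\times SO(n_1)\times SO(n_2)$ (the $\R_{+}^{\ast}$ acting by homotheties on all of $TM$); and because $D$ is non-closed, $F^D=d\theta_g\neq 0$, so $D$ is non-flat on $L$ and $\mathrm{Hol}^0(D)$ surjects onto the $\R_{+}^{\ast}$-factor. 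Equivalently, $\mathfrak h:=\mathfrak{hol}(D)$ lies in $\R\,\id\oplus\mathfrak{so}(n_1)\oplus\mathfrak{so}(n_2)$ and projects onto $\R\,\id$.

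\emph{Step 2 (curvature).} As $D$ preserves both distributions, the leaves $M_1\times\{\ast\}$ and $\{\ast\}\times M_2$ are totally geodesic and $D$ restricts on them to \emph{closed} Weyl connections (the restricted Lee forms $-d(f|_{M_1})$ and $d(f|_{M_2})$ are exact), hence to Levi-Civita connections of metrics in $[g_1]$, $[g_2]$; consequently $R^D_{X_1,Y_1}$ (resp. $R^D_{X_2,Y_2}$) acts only in the $\mathfrak{so}(n_1)$- (resp. $\mathfrak{so}(n_2)$-) block and vanishes on the other factor. The essential information is the mixed curvature: a direct computation (using $D_{X_1}Y_2=0$ for basic fields) gives, for $X_i,Z_i\in TM_i$,
\begin{equation*}
R^D_{X_1,X_2}=F^D(X_1,X_2)\,\id+X_1\wedge v(X_2)+X_2\wedge v'(X_1),
\end{equation*}
where $X\wedge Y\colon Z\mapsto g(Y,Z)X-g(X,Z)Y$ and $v\colon TM_2\to TM_1$, $v'=v^{\ast}\colon TM_1\to TM_2$ are the mutually adjoint bundle maps with $g_1(v(X_2),X_1)=g_2(v'(X_1),X_2)=F^D(X_1,X_2)$; concretely $v$ is the mixed Hessian $(\partial^2 f/\partial x_1^a\partial x_2^b)$, so $v\equiv 0$ iff $f$ splits as $f_1(x_1)+f_2(x_2)$, i.e. iff $D$ is closed. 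Thus non-closedness is exactly $v\not\equiv 0$.

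\emph{Step 3 (generating the holonomy).} Fix $p$ with $v_p\neq 0$. Choosing a right singular vector $X_2$ of $v_p$ with singular value $\sigma>0$ and $X_1:=\sigma^{-1}v(X_2)$ gives $v(X_2)=\sigma X_1$, $v'(X_1)=\sigma X_2$, hence $R^D_{X_1,X_2}|_p=\sigma\,\id$; so $\id\in\mathfrak h$ and $\mathfrak h=\R\,\id\oplus\mathfrak k$ with $\mathfrak k:=\mathfrak h\cap(\mathfrak{so}(n_1)\oplus\mathfrak{so}(n_2))$. For fixed nonzero $w_0\in\mathrm{Im}\,v_p$ the operators $X_1\wedge w_0$ Lie-generate $\mathfrak{so}(n_1)$ (and symmetrically), so $\mathfrak k$ surjects onto both $\mathfrak{so}(n_i)$; it remains to exclude proper subalgebras with this property. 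By Goursat's lemma (each $\mathfrak k\cap\mathfrak{so}(n_i)$ being an ideal of $\mathfrak{so}(n_i)$) such subalgebras come from a common quotient of $\mathfrak{so}(n_1)$ and $\mathfrak{so}(n_2)$; one rules them out by taking a singular-value decomposition $v(\epsilon_j)=\sigma_j e_j$ of $v_p$ and evaluating the formula above on the corresponding orthonormal frames, then bracketing the $\mathfrak{so}$-parts $X_1\wedge v(X_2)+X_2\wedge v'(X_1)$ (brackets between $\mathfrak{so}(n_1)$ and $\mathfrak{so}(n_2)$ vanish), which yields $\mathfrak{so}(n_1)\oplus 0$ and $0\oplus\mathfrak{so}(n_2)$ inside $\mathfrak k$. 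This proceeds by cases on $\mathrm{rank}\,v_p$ and on the dimensions: if one $n_i$ is $1$ or $2$ it is immediate (using that $\mathfrak{so}(n_j)$ is perfect for $n_j\geq 3$), and the low-dimensional coincidences $\mathfrak{so}(3)\cong\mathfrak{so}(3)$ and $\mathfrak{so}(4)=\mathfrak{su}(2)\oplus\mathfrak{su}(2)$ are handled via the observation that if $\mathfrak k$ contains all $e_i\wedge e_j-\epsilon_i\wedge\epsilon_j$ then, bracketing two of them, it contains all $e_i\wedge e_k+\epsilon_i\wedge\epsilon_k$, hence every $e_i\wedge e_j$ and $\epsilon_i\wedge\epsilon_j$ separately. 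Thus $\mathfrak h=\R\,\id\oplus\mathfrak{so}(n_1)\oplus\mathfrak{so}(n_2)$, i.e. $\mathrm{Hol}^0(D)=\R_{+}^{\ast}\times SO(n_1)\times SO(n_2)$, whenever $(n_1,n_2)\neq(2,2)$. If $n_1=n_2=2$, write $\mathfrak{so}(2)_i=\R E_i$: the curvature operators at $p$ are $\id$, the leaf terms (multiples of $E_1$ and of $E_2$), and $\sigma_2E_1-\sigma_1E_2$, $-\sigma_1E_1+\sigma_2E_2$, which span $\R\,\id\oplus\R E_1\oplus\R E_2$ unless at every point both leaves are flat and $\sigma_1=\sigma_2$ (the mixed Hessian is everywhere a conformal matrix), in which case they span exactly $\R\,\id\oplus\R(E_1-E_2)$, the Lie algebra of $\C^{\ast}$ acting on $\R^4=\C^2$.

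Finally, since $\mathrm{Hol}^0(D)$ is generated by all parallel transports of all curvature operators $R^D_{X,Y}|_q$, and the operator algebra at each $q$ is one of the two algebras above — both normalized by $\mathrm{Hol}^0(D)$ — one concludes that $\mathrm{Hol}^0(D)=\R_{+}^{\ast}\times SO(n_1)\times SO(n_2)$ always, except when $n_1=n_2=2$ and the degeneracy above holds everywhere, where $\mathrm{Hol}^0(D)=\C^{\ast}$. I expect the hard part to be Step 3: the bracket computations showing that the mixed-curvature operators generate the full $\mathfrak{so}(n_1)\oplus\mathfrak{so}(n_2)$, uniformly across the ranks of the mixed Hessian and the exceptional isomorphisms of small orthogonal Lie algebras, and in particular verifying that $(2,2)$ is the only dimension pair admitting a proper holonomy group, necessarily $\C^{\ast}$.
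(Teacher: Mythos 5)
Your proposal is correct and follows the same overall strategy as the paper: an a priori containment of the restricted holonomy in $\R_{+}^{\ast}\times SO(n_1)\times SO(n_2)$ coming from the two $D$-parallel distributions and the induced connection on $L$, the Ambrose--Singer theorem, the identification of the mixed curvature operator $R^D_{X_1,X_2}$ with $F(X_1,X_2)\,\mathrm{Id}$ plus a bracket of $F$ with $X_1\wedge X_2$, and an algebraic argument that these operators generate $\R\oplus\mathfrak{so}(n_1)\oplus\mathfrak{so}(n_2)$ except when $n_1=n_2=2$. The differences are in the packaging. You derive the curvature formula from the explicit normal form $g=g_1+e^{2f}g_2$, identifying $F^{\sharp}$ with the mixed Hessian of $f$, whereas the paper deduces it from the general symmetry-failure identity (\ref{RofD}) together with the vanishing of the mixed block of $R^D$; you obtain $\mathrm{Id}\in\mathfrak{hol}(D)$ by evaluating on a singular pair of $v_p$, whereas the paper evaluates $R^D$ on $F$ itself to get $R^D(F)=\|F\|^2\,\mathrm{Id}$; and your Step 3 (singular-value decomposition of $v_p$ plus Goursat's lemma) is exactly the content of the paper's Lemma \ref{lem}, which is proved by expanding $F=\sum_k X_k\wedge Y_k$ in an orthonormal basis and running a case analysis on $\dim V_1$ versus $\dim V_2$ and on the linear (in)dependence of the $Y_k$ --- your singular-value normalization makes the brackets cleaner and Goursat is a tidy way to phrase what must be excluded, but the computations are the same ones, and you rightly flag them as the hard part. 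Two caveats. First, your claim in Step 2 that $R^D_{X_1,Y_1}$ \emph{vanishes} on the $TM_2$-factor does not obviously follow from $D$ preserving the two distributions; block-diagonality is all that holds in general, and fortunately all you actually use. Second, your pointwise characterization of when the $(2,2)$ case degenerates to $\C^{\ast}$ goes beyond what the paper establishes (the paper only proves the dichotomy and cites \cite{BeMo} for the realization of $\C^{\ast}$ by hyper-Hermitian products); the concluding parallel-transport paragraph is the least rigorous part of your argument, since a $2$-dimensional curvature span at every point does not by itself force the holonomy to be $\C^{\ast}$ --- one needs the corresponding complex structure $E_1-E_2$ to be $D$-parallel --- so that step should either be tightened or replaced by the citation to \cite{BeMo}.
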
 
Here $\R_{ + } ^{ \ast } \times SO(n_1) \times SO(n_2)\subset CO^+(n_1+n_2)$ is the subgroup with embedding $(r,A,B) \mapsto r \begin{pmatrix} A & 0 \\ 0 & B 
\end{pmatrix} $ and $\C ^{ \ast } =  \R_{ + } ^{ \ast } \times SO(2) \subset CO^+(4) $ is the subgroup with diagonal embedding $$(r,A) \mapsto r \begin{pmatrix} A & 0 \\ 0 & A 
\end{pmatrix}. $$ 
Before proving the theorem we are going to recall some basics about holonomy (\cite{Baum}). 
Let $M^n$ be a smooth manifold and let $\mathcal{L} (M,x)$ denote the set of all piecewise smooth loops in a point $x \in M$. If $M$ is equipped with a linear connection $\nabla$ on the tangent bundle $TM$, we can parallel translate a tangent vector $X$ in $T_xM$ along any given piecewise smooth curve $\gamma :[0,1] \to M $, starting at $x=\gamma (0)$, i.e. with $\nabla $ we can link the tangent spaces in different points via a vector space isomorphism $\mathcal{P}_{\gamma (t) }$, which explains the terminology \em connection\em. 
The vector space isomorphism is called \em parallel displacement \em $\mathcal{P}_{\gamma (t)}$ and for any smooth curve $\gamma$,
\begin{align*}
\mathcal{P}_{\gamma (t)} :T_{\gamma (0) } & M \to T_{\gamma (t)} M \\
         & X \mapsto \mathcal{P}_{\gamma (t)} (X):= X(t),
\end{align*}
where $X(t)$ is a vector field along $\gamma $ satisfying the equation $\nabla _{\dot{\gamma } (t)} X(t) =0$ for all $t \in [0,1] \subset \R  $.  

The \em holonomy group \em of $\nabla$ with respect to the base point $x$ is the Lie group of all parallel displacements along piecewise smooth loops in $x$:
$$
Hol_x(M,\nabla ) := \{ \mathcal{P}_{\gamma (1)} | \gamma \in \mathcal{L} (M,x)  \} \subset GL(T_x M).
$$
If $M$ is connected then this Lie group depends on the base point $x$ only up to conjugation. The holonomy group is connected if $M$ is simply connected. The identity component $Hol_0 (M,\nabla)$ is called \em restricted holonomy group\em , and is the group generated by parallel displacements along homotopically trivial loops.
The \em holonomy algebra \em $\mathfrak{hol}_x(M,\nabla)$ is the Lie algebra of the restricted holonomy group.  
Both are given with their respective representation on the tangent space $T_xM$, which is usually identified with $\R ^n$. Thus we can consider the holonomy group as matrix group $Hol_x(M, \nabla) \subset GL(n,\R)$ up to conjugation. At different points in a connected component the holonomy groups are conjugated by an element in $GL(n,\R)$, which is given by the parallel displacement along a piecewise smooth curve joining theses points. Furthermore the holonomy group is closed if it acts irreducibly, which is not true in general. 

A frequently used theorem to calculate holonomy groups which will be crucial later is the $Ambrose$-$Singer$ $Theorem$, which states that for a manifold $M$ with linear connection $\nabla $, the holonomy algebra $\mathfrak{hol}_x(M,\nabla)$ is equal to 
$$
span \{ \mathcal{P}_{\gamma (t) } ^{-1} \circ R^{\nabla} (\mathcal{P}_{\gamma (t) }X,\mathcal{P}_{\gamma (t) } Y) \circ \mathcal{P}_{\gamma (t) } \  | \gamma (0) =x,\  X,Y \in T_x M \}.
$$  
In particular this implies the following inclusion:
 $$
span\{ R^{\nabla} (X,Y)| X,Y \in T_x M \} \subset \mathfrak{hol}_x(M,\nabla). 
$$
Throughout we are going to denote the holonomy group and algebra with $Hol(\nabla)$ and $\mathfrak{hol} (\nabla)$, when there are not any confusions about the manifold and the point.

In order to prove now Theorem \ref{thm} we first prove two straightforward algebraic facts. Consider the canonical isomorphism $\sharp : T^{\ast}M \to TM$ and its inverse $\flat :TM \to T^{\ast}M$, traditionally called "raising", respectively "lowering" of indices, induced by the metric. 
Throughout we will not make any distinction between raised, respectively lowered indices, nor between the holonomy group and the matrix group we get 
by choosing a basis at a point in its tangent space.

Let $V$ be an euclidean vector space with scalar product $\langle \cdot , \cdot \rangle$.
\begin{satz} 
For the commutator in $End(V)$ of $2$-forms $(X_i\wedge Y_i)$, $X_i,Y_i \in V$, $i =1,2$, seen as skew-symmetric endomorphisms of $V$ it holds that 
\begin{gather}  
\begin{aligned} \label{eq}
[X_1 \wedge Y_1 , X_2 \wedge Y_2] &= \langle X_1,X_2 \rangle Y_1 \wedge Y_2
+\langle Y_1,Y_2 \rangle X_1\wedge X_2 \\ 
 &\ \ \ - \langle Y_1,X_2 \rangle X_1\wedge
 Y_2- \langle X_1,Y_2 \rangle Y_1\wedge X_2.
\end{aligned}
\end{gather}
\begin{proof}
The identification of two-forms with skew-symmetric endomorphisms is given by
$$
(X\wedge Y)(Z)= \langle X,Z \rangle Y - \langle Y,Z \rangle X,
$$
with $X,Y,Z \in V$.
Hence, for $X_1,X_2,Y_1,Y_2, Z \in V$

\begin{align*}
[X_1 \wedge Y_1 , X_2 \wedge Y_2](Z) &= X_1 \wedge Y_1 (\langle X_2,Z \rangle Y_2 - \langle Y_2,Z \rangle X_2) \\  
&\ \ \  -X_2 \wedge Y_2 ( \langle X_1,Z \rangle Y_1 - \langle Y_1,Z \rangle X_1) \\
&= \langle X_2,Z \rangle \langle X_1,Y_2 \rangle Y_1 - \langle X_2,Z \rangle \langle Y_1,Y_2 \rangle X_1 \\
&\ \ \ - \langle Y_2,Z \rangle \langle X_1,X_2 \rangle Y_1 + \langle Y_2,Z \rangle \langle Y_1,X_2 \rangle X_1 \\
&\ \ \ - \langle X_1,Z\rangle \langle X_2,Y_1 \rangle Y_2 + \langle X_1,Z \rangle \langle Y_2,Y_1 \rangle X_2 \\
&\ \ \ + \langle Y_1,Z \rangle \langle X_2,X_1 \rangle Y_2 - \langle Y_1,Z \rangle \langle Y_2,X_1 \rangle X_2.
\end{align*}
Then after resummation, it follows
\begin{align*}
[X_1 \wedge Y_1 , X_2 \wedge Y_2](Z) &= \langle X_1,X_2\rangle (\langle Y_1,Z \rangle Y_2 - \langle Y_2,Z \rangle Y_1) \\
&\ \ \ +\langle Y_1,Y_2 \rangle (\langle X_1,Z \rangle X_2- \langle X_2,Z \rangle X_1) \\
&\ \ \  - \langle Y_1,X_2 \rangle (\langle X_1,Z \rangle Y_2-\langle Y_2,Z \rangle X_1) \\
&\ \ \ -\langle X_1,Y_2 \rangle (\langle Y_1,Z \rangle X_2-\langle X_2,Z \rangle Y_1) \\
&= \langle X_1,X_2 \rangle Y_1 \wedge Y_2 (Z)
+\langle Y_1,Y_2 \rangle X_1\wedge X_2(Z) \\ &\ \ \ -\langle Y_1,X_2 \rangle X_1\wedge
 Y_2(Z) -\langle X_1,Y_2 \rangle Y_1\wedge X_2(Z).
\end{align*}
\end{proof}
\end{satz}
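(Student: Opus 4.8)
The plan is to prove \eqref{eq} by a direct computation, evaluating both sides on an arbitrary vector $Z \in V$ and using only the defining formula for the skew-symmetric endomorphism associated to a $2$-form, namely $(X\wedge Y)(Z) = \langle X, Z\rangle Y - \langle Y, Z\rangle X$, together with bilinearity of the scalar product. Concretely, I would first expand the composition $(X_1\wedge Y_1)\circ(X_2\wedge Y_2)$ applied to $Z$: substituting $(X_2\wedge Y_2)(Z) = \langle X_2, Z\rangle Y_2 - \langle Y_2, Z\rangle X_2$ and then applying $X_1\wedge Y_1$ to each of the two resulting vectors yields four scalar-weighted vectors. Doing the same for $(X_2\wedge Y_2)\circ(X_1\wedge Y_1)(Z)$ and subtracting produces eight terms in total.

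The second step is purely organizational: regroup the eight terms according to which pair of inner products they carry. The terms involving $\langle X_1, X_2\rangle$ should reassemble, via the same defining formula read backwards, into $\langle X_1, X_2\rangle\,(Y_1\wedge Y_2)(Z)$, and likewise for $\langle Y_1, Y_2\rangle$, $\langle Y_1, X_2\rangle$ and $\langle X_1, Y_2\rangle$. After this regrouping one reads off exactly the right-hand side of \eqref{eq}, and since $Z \in V$ was arbitrary the claimed identity of endomorphisms follows.

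An alternative, slightly more conceptual route would be to observe that both sides of \eqref{eq} are multilinear in $X_1, Y_1, X_2, Y_2$ and antisymmetric under the exchanges $X_1 \leftrightarrow Y_1$ and $X_2 \leftrightarrow Y_2$; it then suffices to check the identity when the four vectors range over an orthonormal basis of $V$, which reduces it to the well-known structure constants of $\mathfrak{so}(V) \cong \Lambda^2 V$. I would nonetheless keep the direct computation, since it is self-contained and no longer than the reduction argument.

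There is essentially no obstacle here beyond bookkeeping: the only points requiring care are the sign placements in the eight intermediate terms and the correct matching of inner-product factors when collapsing pairs of terms back into wedge products. As a sanity check one can note that the right-hand side is manifestly antisymmetric under simultaneously swapping $(X_1, Y_1)$ with $(X_2, Y_2)$, consistent with the antisymmetry of the commutator, and that it visibly lies in $\Lambda^2 V \subset \mathfrak{so}(V)$, as it must.
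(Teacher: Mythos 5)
Your proposal is correct and follows essentially the same route as the paper's proof: expand both compositions on an arbitrary $Z$ using $(X\wedge Y)(Z)=\langle X,Z\rangle Y-\langle Y,Z\rangle X$, collect the eight resulting terms by their inner-product factors, and reassemble them into the four wedge products on the right-hand side. The alternative basis-reduction argument you mention is a reasonable sanity check but, as you note, offers no real savings here.
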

This equality helps us now to prove the following algebraic lemma, which itself will be used to prove our main result. 
\begin{lem} \label{lem}
Let $V_1,V_2$ be two euclidean vector spaces and $F\in \Lambda ^2(V_1\oplus V_2) \simeq \mathfrak{so} (V_1 \oplus V_2) $  a non-identically zero two-form which vanishes over $V_i\wedge V_i$, $i=1,2$. Let $\mathfrak{g}\subset \Lambda ^2(V_1 \oplus V_2) $ be the Lie algebra generated by the Lie brackets of $F$ and two-forms $(X \wedge Y)\in V_1\otimes V_2 \subset \Lambda ^2(V_1\oplus V_2)$. Then 
$$
\mathfrak{g}=\mathfrak{so} (V_1) \oplus \mathfrak{so}(V_2),
$$  
unless $dim \bigskip V_1 = dim \bigskip V_2 =2$.
\end{lem}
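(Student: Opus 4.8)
The plan is to reduce everything to the singular value decomposition of the linear map determined by $F$. As a skew-symmetric endomorphism of $V:=V_1\oplus V_2$, a two-form vanishing on $V_i\wedge V_i$ sends $V_1$ into $V_2$ and $V_2$ into $V_1$; so put $\phi:=F|_{V_1}\colon V_1\to V_2$, whence $F|_{V_2}=-\phi^{\ast}$, and $\phi\neq 0$ since $F\neq 0$. Applying \eqref{eq} (extended by linearity in its first slot, as $F$ is a sum of decomposable two-forms) gives $[F,X\wedge Y]=(FX)\wedge Y+X\wedge(FY)$, hence for the generators of $\mathfrak g$
$$
[F,X\wedge Y]=\phi(X)\wedge Y-X\wedge\phi^{\ast}(Y)\in\Lambda^2 V_2\oplus\Lambda^2 V_1,\qquad X\in V_1,\; Y\in V_2,
$$
because $\phi(X),Y\in V_2$ while $X,\phi^{\ast}(Y)\in V_1$. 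This already yields $\mathfrak g\subseteq\mathfrak{so}(V_1)\oplus\mathfrak{so}(V_2)$, so only the reverse inclusion is at issue. I would then fix orthonormal bases $(e_i)$ of $V_1$ and $(f_j)$ of $V_2$ adapted to $\phi$: $\phi(e_i)=\sigma_i f_i$ for $i\le r:=\mathrm{rank}(\phi)\ge 1$ and $\phi(e_i)=0$ for $i>r$, with $\sigma_1\ge\cdots\ge\sigma_r>0$ (set $\sigma_k:=0$ for $k>r$). The generators of $\mathfrak g$ then become the explicit elements $[F,e_i\wedge f_j]=\sigma_i\,f_i\wedge f_j-\sigma_j\,e_i\wedge e_j$. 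Two consequences of \eqref{eq} will be used constantly: $[u\wedge w,u\wedge v]=w\wedge v$ for pairwise orthonormal $u,v,w$ (together with its corollary that, for any fixed unit vector $u$, the elements $u\wedge v$ with $v\perp u$ Lie-generate the full $\mathfrak{so}$ of the ambient space), and $[X\wedge Y,Z\wedge T]=0$ when $\{X,Y\}$ and $\{Z,T\}$ span orthogonal subspaces.

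The first case is $r<n_1$, the case $r<n_2$ being symmetric and the two together exhausting everything except $r=n_1=n_2$. Choosing $i_0>r$ gives $[F,e_{i_0}\wedge f_j]=-\sigma_j\,e_{i_0}\wedge e_j\in\mathfrak g$ for all $j\le r$, and repeated bracketing (via \eqref{eq}) produces all of $\Lambda^2 V_1=\mathfrak{so}(V_1)$. Subtracting the $\Lambda^2 V_1$ component from the generators $[F,e_i\wedge f_j]$ with $i,j\le r$ then yields $f_i\wedge f_j\in\mathfrak g$ for all $i,j\le r$; if moreover $r<n_2$ one also has $[F,e_i\wedge f_j]=\sigma_i\,f_i\wedge f_j\in\mathfrak g$ for $i\le r<j$, and repeated bracketing produces all of $\mathfrak{so}(V_2)$, whereas if $r=n_2$ the $f_i\wedge f_j$ with $i,j\le r$ already span $\mathfrak{so}(V_2)$. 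In either case $\mathfrak g=\mathfrak{so}(V_1)\oplus\mathfrak{so}(V_2)$.

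The second case is $r=n_1=n_2=:n$, so $\phi$ is an isomorphism; $n=2$ is excluded by hypothesis and $n=1$ is trivial ($\mathfrak{so}(V_i)=0$), so assume $n\ge 3$. From $[F,e_i\wedge f_j]-[F,e_j\wedge f_i]=(\sigma_i+\sigma_j)(f_i\wedge f_j-e_i\wedge e_j)$ and $\sigma_i+\sigma_j>0$ we get $\omega^{-}_{ij}:=f_i\wedge f_j-e_i\wedge e_j\in\mathfrak g$ for all $i\neq j$. Since $n\ge 3$ we may pick three distinct indices $i,j,k$, and a short computation with \eqref{eq} gives $[\omega^{-}_{ij},\omega^{-}_{jk}]=-(f_i\wedge f_k+e_i\wedge e_k)$; combining this with $\omega^{-}_{ik}$ shows $e_i\wedge e_k\in\mathfrak g$ and $f_i\wedge f_k\in\mathfrak g$ separately. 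Bracketing such an $e_i\wedge e_k$ against the various $\omega^{-}_{k\ell}$ produces every $e_a\wedge e_b$, so $\mathfrak{so}(V_1)\subseteq\mathfrak g$, and symmetrically $\mathfrak{so}(V_2)\subseteq\mathfrak g$; again $\mathfrak g=\mathfrak{so}(V_1)\oplus\mathfrak{so}(V_2)$.

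I expect the real obstacle to be precisely this last case, which is also what forces the exclusion $\dim V_1=\dim V_2=2$: there the linear span of the generators $[F,X\wedge Y]$ is only the ``antidiagonal'' $\spa\{\omega^{-}_{ij}:i\neq j\}$, of dimension $\binom{n}{2}$, i.e. half of $\mathfrak{so}(V_1)\oplus\mathfrak{so}(V_2)$, and splitting a single $\omega^{-}_{ij}$ into its two components requires a bracket $[\omega^{-}_{ij},\omega^{-}_{jk}]$ involving a third index $k$. When $n=2$ there is no such $k$; and if in addition all the $\sigma_i$ coincide, $\mathfrak g$ collapses to the $1$-dimensional abelian algebra $\R\,\omega^{-}_{12}$, which, up to the conjugation inherent in the notion of holonomy, is exactly the $\mathfrak{so}$ part of the exceptional group $\C^{\ast}$ in Theorem \ref{thm}. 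Hence the dimension hypothesis cannot be dropped.
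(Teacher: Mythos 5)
Your proof is correct, but it is organized differently from the paper's. The paper expands $F=X_1\wedge Y_1+\dots+X_{n_1}\wedge Y_{n_1}$ with respect to an orthonormal basis of $V_1$ only, and then splits into cases according to whether $\dim V_1<\dim V_2$, $\dim V_1>\dim V_2$, or $\dim V_1=\dim V_2\geqslant 3$, with a further subdivision in the last case according to whether the $Y_i$ are linearly dependent or independent (the independent case requiring an auxiliary dual family $Y'_k$ and a somewhat heavier bracket computation). You instead diagonalize $F$ completely via the singular value decomposition of $\phi=F|_{V_1}$, which amounts to choosing adapted orthonormal bases of \emph{both} factors at once; the case analysis then collapses to ``$r<\max(n_1,n_2)$'' versus ``$r=n_1=n_2$'', the generators become the explicit elements $\sigma_i f_i\wedge f_j-\sigma_j e_i\wedge e_j$, and the paper's hardest case reduces to the clean identity $[\omega^-_{ij},\omega^-_{jk}]=-(f_i\wedge f_k+e_i\wedge e_k)$, which together with $\omega^-_{ik}$ separates the two components. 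Your normal form also makes the exceptional case transparent: the obstruction at $n_1=n_2=2$ is visible in the singular values, whereas the paper only refers back to \cite{BeMo} for it. Two small points to tighten. First, in the case $r<n_1$ you must let $i_0$ range over \emph{all} indices $>r$: a single $i_0$ yields only $e_{i_0}\wedge e_j$ for $j\leqslant r$, hence only $\mathfrak{so}$ of an $(r+1)$-dimensional subspace of $V_1$ when $n_1>r+1$; with all $i_0>r$ the missing $e_{i_0}\wedge e_{i_1}$ are recovered as $[e_{i_0}\wedge e_1,e_{i_1}\wedge e_1]$. Second, in your closing discussion the assertion that for $n_1=n_2=2$ the span of the generators is ``only the antidiagonal'' holds only when $\sigma_1=\sigma_2$; if $\sigma_1\neq\sigma_2$ the generators $\sigma_1 f_1\wedge f_2-\sigma_2 e_1\wedge e_2$ and $\sigma_2 f_1\wedge f_2-\sigma_1 e_1\wedge e_2$ are already independent and $\mathfrak g=\mathfrak{so}(2)\oplus\mathfrak{so}(2)$ even in dimension $2+2$. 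This does not affect your proof (the lemma only excludes that case from its conclusion, and your equal-singular-value example correctly shows the hypothesis cannot be dropped), but the sentence as written overstates the degeneration.
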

\begin{proof}
Assume first that $dim V_1\neq dim V_2$. We will prove the case where $dim V_1 < dim V_2 $, from which the case $dim V_2 < dim V_1 $ follows. 
Let $\{X_i\} _{i=1,...,n_1}$ be an orthonormal basis for $V_1$. Since the $2$-form $F$ vanishes over  $V_i\wedge V_i$, $i=1,2$, we can express it as
$$
F=X_1\wedge Y_1 +...+X_{n_1}\wedge Y_{n_1}
$$
where $Y_1,...,Y_{n_1} \in V_2$ are not all equal to zero. 
Then from (\ref{eq}) it follows that $\mathfrak{g}$ is a subalgebra of $\mathfrak{so} (V_1) \oplus \mathfrak{so}(V_2)$, because for $2$-forms $(X \wedge Y)$, with $X\in V_1$, $Y \in V_2$,

$$
[F,(X \wedge Y)]=\sum_{k=1}^{n_1} \langle Y_k,Y \rangle X_k\wedge X + \langle X_k,X \rangle Y_k\wedge Y.
$$
Now it suffices to show that $\mathfrak{g}$ contains $\mathfrak{so} (V_1) \oplus\{0\}$ and $\{0\} \oplus \mathfrak{so}(V_2)$.
For this we consider the orthogonal complement of the space generated by the $Y_1,...,Y_{n_1}$ in $V_2$, such that 
$$
V_2= Vect(Y_1,...,Y_{n_1})\oplus  Vect(Y_1,...,Y_{n_1})^{\bot}.
$$
The orthogonal complement $Vect(Y_1,...,Y_{n_1})^{\bot}$ is non-zero since by assumption $dim(V_1)<dim(V_2)$.
Then for all $Y \in Vect(Y_1,...,Y_{n_1})^{\bot}$, since $Y_i\bot Y$ 
$$
[F,(X_i \wedge Y)]= [X_i \wedge Y_j,(X_i \wedge Y)]= Y_i \wedge Y \in \mathfrak{g}.
$$
For all non-zero $Y \in Vect(Y_1,...,Y_{n_1})^{\bot}$ and $X_i,X_j \in V_1$,
\begin{align*}
\frac{1}{|Y|^2} [[F,(X_i \wedge Y ) ],[F,(X_j \wedge Y)]]&= \frac{1}{|Y|^2} [Y_i \wedge Y,Y_j \wedge Y] \\ &= Y_i \wedge Y_j \in \mathfrak{g}.
\end{align*}
Also, for all $Y,Y' \in Vect(Y_1,...,Y_{n_1})^{\bot}$ and $k \leqslant n_1$, such that $Y_k \neq 0$
\begin{align*}
\frac{1}{|Y_{k}|^2} [[F,(X_k \wedge Y)],[F,(X_k \wedge Y')]] &= \frac{1}{|Y_{k}|^2} [Y_k \wedge Y,Y_k \wedge Y'] \\ &= Y \wedge Y' \in \mathfrak{g}.
\end{align*}
Hence $\{0\} \oplus \mathfrak{so} (V_2)\subset \mathfrak{g}$. \\
Now, we consider an orthonormal basis $Y'_1,...,Y'_{n_2}$ for $V_2$ and express $F$ in this basis 
$$
F=X'_1\wedge Y'_1 +...+X'_{n_2}\wedge Y'_{n_2}
$$
with $X'_1,...,X'_{n_2} \in V_1$ not all equal to zero. If $X'_1,...,X'_{n_2}$ generate $V_1$, we can generate $\mathfrak{so} (V_1) \oplus\{0\}$ in $\mathfrak{g}$, with 
$$
[F,X'_i\wedge Y'_j]-\sum_{k=1}^{n_2} \langle X'_k,X'_i \rangle Y'_k\wedge Y'_j = X'_j\wedge X'_i \in \mathfrak{g},
$$
since we already showed that $\sum_{k=1}^{n_2} \langle X'_k,X'_i \rangle Y'_k\wedge Y'_j \in \mathfrak{g}$ . 
If $X'_1,...,X'_{n_2}$ do not generate $V_1$, we can split $V_1$,
$$
V_1=Vect(X'_1,...X'_{n_2}) \oplus Vect(X'_1,...X'_{n_2})^{\bot}, 
$$
and analogous to above we can show that $\mathfrak{so} (V_1) \oplus\{0\}\subset \mathfrak{g}$, hence $\mathfrak{g}=\mathfrak{so} (V_1) \oplus \mathfrak{so} (V_2)$. \\
If $dim V_1 =dim V_2 \geqslant 3$, we express as before the $2$-form $F$ with respect to an orthonormal basis $X_1,...,X_{n_1}$ of $V_1$, 
$$
F=X_1\wedge Y_1 +...+X_{n_1}\wedge Y_{n_1}
$$
for some $Y_1,...,Y_{n_1} \in V_2$ not all equal to zero. 
If the vectors $Y_1,...,Y_{n_1}$ are linearly dependent we proceed as we did in the case $dim V_1 \neq dim V_2$. 
If the vectors $Y_1,...,Y_{n_1}$ are linearly independent, consider for all $k  \in \{1,...,dim(V_2)\}$, $Y'_k \in V_2$, such that $Y'_k \bot Y_i$ for all $i\neq k$ and $ \langle Y_k,Y'_k \rangle =1$. Then from (\ref{eq})
\begin{align*}
[F,X_i \wedge Y'_j]& =[X_i\wedge Y_i, X_i\wedge Y'_j]+[X_j\wedge Y_j, X_i\wedge Y'_j]\\ & = Y_i\wedge Y'_j + X_j \wedge X_i.
\end{align*}   \\
Taking $i,j,k \in \{ 1, ...,dim V_1  \}$ all different, it yields,
\begin{align*}
\frac{1}{(|Y_i|^2+|Y_j|^2)} &([[F,X_i\wedge Y'_j],[F,X_i \wedge Y'_k]]+[F,X_j\wedge Y'_k]) + [[F,X_j \wedge Y'_j],[F,X_j \wedge Y'_k]]) \\
& = \frac{1}{(|Y_i|^2+|Y_j|^2)} ([X_j\wedge X_i, X_k \wedge X_i]+[Y_i\wedge Y'_j,Y_i \wedge Y'_k]+X_k\wedge X_j \\ 
& + Y_j \wedge Y'_k +[Y_j\wedge Y'_j , Y_j \wedge Y'_k]) \\
& = \frac{1}{(|Y_i|^2+|Y_j|^2)} (X_j\wedge X_k+|Y_i|^2Y'_j\wedge Y'_k + X_k\wedge X_j+ Y_j \wedge Y'_k \\ 
& + |Y_j|^2 Y'_j \wedge
Y'_k - Y_j \wedge Y'_k) \\
& =Y'_j \wedge Y'_k \in \mathfrak{g}.
\end{align*}
Thus as before we can prove that $\mathfrak{g}=\mathfrak{so} (V_1) \oplus \mathfrak{so}(V_2)$.
\end{proof}
Now we can prove our main result.
\begin{proof}[Proof of Theorem \ref{thm}] 
Recall the Ambrose-Singer theorem, which says that $$span\{R^D (X\wedge Y)|X,Y \in TM\} \subset  \mathfrak{hol} (D).$$
Since $CO(n_1)+CO(n_2)$ is not in $CO(n_1+n_2)$, the "generic" case here is 
$$
(CO(n_1 )+CO(n_2 ) )\cap CO(n_1 +n_2 ) = \R_{ + } ^{ \ast } \times SO(n_1) \times SO(n_2).
$$
Hence it suffices to show that 
$
span\{R^D(X \wedge Y)| X,Y \in TM\}= \R  \oplus \mathfrak{so} (n_1) \oplus \mathfrak{so}(n_2).
$ 
For a conformal product, for all $X \in TM $ and $Y \in TM_i$, $i=1,2$ lifted to a vector field on $M$, it results that $
D_X Y$ is in  $TM_i$.
This means for the curvature tensor $R^D$, for $X \in TM_1$, $Y \in TM_2$ lifted to vector fields on $M$,
$
R^D (Z,T,X,Y)=c(R^D_{Z,T}X,Y)=0
$
for all $Z,T \in TM$, and
$
c(X,Y)=F(Z,T)c(X,Y)=0.
$
Thus considering the symmetry failure of $R^D$ in (\ref{RofD}), measured by the Faraday form, for $X \in TM_1$ and $Y \in TM_2$
\begin{align*}
R^D (X,Y,Z,T) =&R^D(Z,T,X,Y) + (F(X)\wedge Y - F(Y)\wedge X )(Z,T) \\  &+F(X,Y)c(Z,T)- F(Z,T)c(X,Y)\\ =& (F(X)\wedge Y - F(Y)\wedge X )(Z,T)+F(X,Y)c(Z,T),
\end{align*}
hence
\[
R^D (X , Y)= (F\wedge Id)(X,Y) + F(X,Y)Id = [F^{\sharp},X \wedge Y^{\flat}] + F(X,Y)Id,
\]
where the bracket $ [ \cdot , \cdot  ] $ denotes the commutator in End(TM) i.e. $R^D(\omega )= [F^{\sharp },\omega  ^{\flat}]+ c(F,\omega )Id$ for all $\omega \in T^{\ast } M_1 \otimes T^{\ast } M_2 \subset \Lambda ^2 M$.
Since $R^D(\omega ) \in \mathfrak{hol} (D)$ for all $\omega \in  \Lambda ^2 M$, we get in particular 
$$
[F^{\sharp},\omega ^{\flat}]+ c(F,\omega )Id  \in \mathfrak{hol} (D),
$$
for every two-form $\omega \in T^{\ast } M_1 \otimes T^{\ast } M_2$.
Furthermore we remark that the Faraday form is element of $T^{\ast } M_1 \otimes T^{\ast } M_2 $, hence for $\omega := F$,
$$
R^D (F)= \langle F,F \rangle Id + [F^{\sharp},F^{\flat}]= ||F ||^2Id,
$$
we obtain $\R \subset\mathfrak{hol} (D) $, since $F$ is not identically zero, i.e. in the non-closed case 
\[
span\{R(F)\}= \R
\]
generates the dilation in the holonomy group. \\
Then by the algebra properties, 
$$
R^D (T^{\ast } M_1 \otimes T^{\ast } M_2)- F(T^{\ast } M_1 , T^{\ast } M_2)Id =[F,T^{\ast } M_1 \otimes T^{\ast } M_2] \in \mathfrak{hol} (M,D),
$$ 
and using Lemma \ref{lem} it follows directly, that $[F,T^{\ast } M_1 \otimes T^{\ast } M_2]$ generates $\mathfrak{so} (n_1) \oplus \mathfrak{so}(n_2)$ in $\mathfrak{hol} (D)$. Thus
$
span\{R^D(X \wedge Y)| X,Y \in TM\}= \R  \oplus \mathfrak{so} (n_1) \oplus \mathfrak{so}(n_2),
$
which shows that the holonomy group is 
$$
Hol_0(D)= \R_{ + } ^{ \ast } \times SO(n_1) \times SO(n_2),
$$
if we don't have $n_1 = n_2 =1$ or $2$. \\
If $n_1=n_2=2$, from calculation in \cite{BeMo} it follows that the holonomy has at least dimension 2, which leaves us with two possibilities, i.e. the generic case and  $\C ^{\ast } \simeq  \R_{ + } ^{ \ast } \times SO(2) \subset CO(4)$. \\
In \cite{BeMo} it was shown that \em hyper-Hermitian \em non-closed conformal products have restricted holonomy equal to $\C ^{\ast }$.
A hyper-Hermitian non-closed product is a non-closed conformal \em multi-product \em, which is a product structure, whose adapted Weyl structure $D$ leaves parallel more than one pair of complementary distributions.

For the sake of completeness, if $n_1=n_2 =1$, then  $
span\{R(F)\}= \R
$, hence
\[
Hol_0 (D)= \R_{ + } ^{ \ast } =  \R_{ + } ^{ \ast } \times SO(1) \times SO(1).
\]

\end{proof}

\section{Einstein-Weyl conformal products}
A Weyl manifold $(M,c,D)$ is called \em Einstein-Weyl \em if the trace-free symmetric part of the Ricci tensor $Ric^D$ vanishes.
In this last section we want to prove that there are no non-closed Einstein-Weyl product structures $(M,c,D)$ except when $n=4$ by generalizing a proposition in \cite{BeMo}.
In particular in  \cite{BeMo} it was given a local characterization of all Einstein-Weyl structures $(M,c,D)$ in dimension $4$ with reducible holonomy.    
As before, we denote by $F$ the Faraday form which is obtained by extending a section $F_0$ of $T^{\ast } M_1 \otimes T^{\ast } M_2 $ to a skew-symmetric bilinear form on $TM=TM_1\oplus TM_2$. We can extend $F_0$ to a symmetric bilinear form $\hat{F}$, for all $X_i,Y_i \in TM_i$:  $$\hat{F}(X_i,Y_i):= 0; \ \ \hat{F}(X_1,X_2):=F(X_1,X_2) \ \ \hat{F}(X_1,X_2)=-F(X_2,X_1). $$ 
\begin{satz}
A non-closed conformal product $(M, c , D )$ with $M=M^{n_1}_1 \times M^{n_2}_2$, $c=[g_1+e^{2f}g_2]$ and $D$ the adapted Weyl structure is Einstein-Weyl if and only if it is locally isomorphic to a conformal product $(M_1 \times M_2 ,c= [g_1 +e^{2f} g_2],D) $, where $M_1 $ and $M_2 $ are open sets of $\R ^2 $, $g_i$ is the flat metric on $ M_i$ and the function $f: M_1 \times M_2 \subset \R ^4 \to \R $ satisfies the $Toda$-type equation 
$$
e^{2f}(\partial _{11} f + \partial _{22} f) + \partial _{33} f + \partial
 _{44} f =0.
$$ 
\end{satz}

Let $\hat{F}$ be the symmetrization of the Faraday form and $Ric^i$ the Ricci curvature of $M_i$, $i=1,2$ with respect to the metric $h_i:=e^{\epsilon (i) 2f}g_i$, with $\epsilon(i):=(-1)^i$. Then we can write the Ricci curvature $Ric ^D$ of $M$ with respect to $D$ \cite{BeMo}, as
$$
Ric^D = Ric^1 + Ric^2 + \frac{2-n}{2}F + \frac{n_1-n_2}{2} \hat{F},
$$ 
which tells us that in the non-closed case $(M,c,D)$ is Einstein-Weyl if and only if $n_1=n_2$ and 
\begin{align} \label{Ric}
Ric^1 +Ric^2 =\varphi(g_1 +e^{2f}g_2)
\end{align}
for some function $\varphi :M \to \R$. 

If $n_1=n_2 =2 $, as shown in \cite{BeMo}, we know that every 2-dimensional metric is locally conformal to the flat metric, hence without loss of generality we can assume that $g_1,g_2$ are flat. Using the formula for the conformal change of the Ricci tensor (\cite{Besse}, 1.159d), for the metrics $g_i,e^{\epsilon (i) 2f}g_i$, $i=1,2$ we have 
\begin{align*}
Ric^1 &=Ric^{g_1}-(n-2)(Ddf-df\circ df))+ (\Delta _1 f -(n-2)|df|^2)g_1 \\
&= \Delta _1 f g_1
\end{align*}
and 
$$
Ric^2 =-\Delta _2 f g_2, 
$$
where $\Delta _i$ denotes the partial Laplacian on $\R^4 =\R ^2 \times \R ^2$, $\Delta _1 f = \partial _{11}f + \partial _{22}f$ and $\Delta _2 f = \partial _{33}f + \partial _{44}f$.
Thus 
$$
-(\Delta _1 f)g_1 + (\Delta _2)g_2 =(-\partial _{11}f - \partial _{22}f)g_1 + (\partial _{33}f + \partial _{44}f) g_2 = \varphi(g_1 +e^{2f}g_2),
$$
and 
$$
-\partial _{11}f - \partial _{22}f= \varphi = e^{-2f} (\partial _{33}f + \partial _{44}f)
$$
which gives us the equation
$$
e^{2f}(\partial _{11} f + \partial _{22} f) + \partial _{33} f + \partial
 _{44} f =0.
$$ 

Assume now $n_1=n_2 \geqslant 3$. By (\ref{Ric}) $(M,c,D)$ is Einstein-Weyl if 
$$
Ric^1 +Ric^2 =\varphi(g_1 +e^{2f}g_2)
$$
for some function $\varphi :M \to \R$. 
Then
$$
Ric^1 =\varphi g_1 = \varphi e^{2f} (e^{-2f}g_1)= \varphi e^{2f} h_1
$$
and
$$
Ric^2=\varphi e^{2f}g_2 =\varphi h_2,
$$
hence $M_1,M_2$ are both Einstein with respect to the metrics $h_1$ and $h_2$ respectively. A classical result for Einstein manifolds says that for a manifold $(M^n,g)$, if $Ric^g =\psi g$, 
then $\psi $ is constant if $n \geqslant 3$.
This implies that for all $x_2 \in M_2 $, $\varphi e^{2f}(\cdot ,x_2)$ is equal to a constant $c_1(x_2)$ and for all $x_1 \in M_1 $, $\varphi (x_1,\cdot )$ is equal to a constant $c_2(x_1)>0$. Thus for all    
$(x_1,x_2)\in M_1\times M_2$: 
$$
c_2 (x_1)e^{2f(x_1,x_2)}=c_1(x_2)> 0,
$$      
meaning that $f(x_1,x_2)=\frac{1}{2} ln(c_1(x_2))-\frac{1}{2} ln(c_2(x_1))$ is a sum of two functions only depending on one factor, which is only possible if $(M,c,D)$ is a Riemannian product and $F$ is closed.
Hence there are no non-closed conformal product manifolds with $n_1=n_2 \geqslant 3$. \\

\vspace{0.5cm}
Universit\'e de Versailles Saint-Quentin, Laboratoire de Math\' ematiques, 45 avenue des Etats-Unis, 78035 Versailles, France \\
$E$-$mail$ $address :  \mathtt{jonas.grabbe@uvsq.fr}$
\end{document}